\theoremstyle{plain}
\newtheorem{theorem}[subsection]{Theorem}
\newtheorem{lemma}[subsection]{Lemma}
\newtheorem{example}[subsection]{Example}
\newcommand{\C}{\mathbf{C}}
\newcommand{\F}{\mathbf{F}}
\newcommand{\Q}{\mathbf{Q}}
\newcommand{\R}{\mathbf{R}}
\newcommand{\Z}{\mathbf{Z}}
\newcommand{\Fbar}{\overline{\F}}
\newcommand\im{\mathrm{im}}
\newcommand\rhobar{\overline{\rho}}
\newcommand\ab{\mathrm{ab}}
\newcommand\et{\mathrm{et}}
\newcommand\Frob{\mathrm{Frob}}
\DeclareMathOperator\Res{\mathrm{Res}}
\DeclareMathOperator{\GL}{GL}
\DeclareMathOperator{\GSp}{GSp}
\DeclareMathOperator{\Sp}{Sp}
\DeclareMathOperator{\PSp}{PSp}
\DeclareMathOperator{\PGSp}{PGSp}
\DeclareMathOperator{\End}{End}
\DeclareMathOperator\Jac{\mathrm{Jac}}
\DeclareMathOperator\Pic{\mathrm{Pic}}
\DeclareMathOperator\SU{\mathrm{SU}}
\DeclareMathOperator\USp{\mathrm{USp}}
\title{Some Modular Abelian Surfaces}
\subjclass[2010]{11G10, 11F46; 11Y40, 11F80}
\author[F. Calegari]{Frank Calegari}  \email{fcale@math.uchicago.edu} \address{The University of Chicago,
5734 S University Ave,
Chicago, IL 60637, USA}
\address{School of Mathematics and Statistics, University of Melbourne, Parkville VIC 3010, Australia}
\author[S. Chidambaram]{Shiva Chidambaram}  \email{shivac@uchicago.edu} \address{The University of Chicago,
5734 S University Ave,
Chicago, IL 60637, USA}
\author[A. Ghitza]{Alexandru Ghitza} \email{aghitza@alum.mit.edu}
\address{School of Mathematics and Statistics, University of Melbourne, Parkville VIC 3010, Australia}
\thanks{The first author was supported in part by NSF Grant
  DMS-1701703. }
\begin{document}

\begin{abstract}
In this note, we use the main theorem of~\cite{BCGP} to give explicit examples of modular abelian surfaces~$A$ with~$\End_{\C} A = \Z$
and~$A$ smooth outside~$2$, $3$, $5$, and~$7$.
\end{abstract}

\maketitle
\setcounter{tocdepth}{2}
{\footnotesize
%\tableofcontents
}

\section{Introduction}
Let~$C/\Q$ be a smooth projective curve of genus~$g$. Let\footnote{There is some ambiguity in the literature
as to whether one defines~$\Gamma_{\C}(s)$ to be~$ (2 \pi)^{-s} \Gamma(s)$  or~$\Gamma_{\R}(s) \Gamma_{\R}(s+1)  = 2 \cdot (2 \pi)^{-s} \Gamma(s)$. It makes no difference
as long as one uses the same choice for both~$\Lambda(C,s)$ and~$\Lambda(\pi,s)$.
To be safe, we make the same choice as Serre~\cite[\S3(20)]{Serre}.}~$\Gamma_{\C}(s) = (2 \pi)^{-s} \Gamma(s)$.
Associated to~$C$ and its Jacobian~$A = \Jac(C)$
is a completed~$L$-function
$$\Lambda(C,s) = % 2^g
\Gamma_{\C}(s)^g
 %(2 \pi)^{-gs} \Gamma(s)^g
 \prod_{p} L_p(C,p^{-s})^{-1},$$
where, for any~$\ell \ne p$, $L_p(C,T) = \det \left(I_{2g} - T \cdot \Frob_p\mid H^1_{\et}(C,\Q_\ell)^{I_p} \right)$.
We say that~$C$ is automorphic if~$\Lambda(C,s) = \Lambda(\pi,s)$, where~$\pi$ is an automorphic
form for~$\GL_{2g}(\Q)$, and~$\Lambda(\pi,s)$ is the
completed~$L$-function
associated to the standard representation
of~$\GL_{2g}$. If~$C$ is automorphic, then
$$\Lambda(C,s) = \pm N^{1-s} \Lambda(C,2-s),$$
where~$N$ is the conductor of~$A$.
One conjectures that all smooth projective curves~$C$ over~$\Q$ are automorphic.
 When~$g= 0$ and~$g= 1$, one knows that~$C$ is automorphic by
theorems of Riemann~\cite{Riemann} and Wiles et al.~\cite{MR1333035,MR1333036,MR1839918} respectively.
The conjecture seems completely hopeless with current technology for general
curves when~$g \ge 3$, but for~$g= 2$ it was recently proved in~\cite{BCGP} that all
such curves over~$\Q$ (and even over totally real fields) were potentially automorphic.
For abelian surfaces over~$\Q$, let us additionally  say that~$A = \Jac(C)$ is \emph{modular} of level~$N$ if there exists
a cuspidal Siegel modular form~$f$ of weight two  such that~$\Lambda(C,s) = \Lambda(f,s)$,
where~$\Lambda(f,s)$ is the completed~$L$-function
associated to the degree four spin representation of~$\GSp_4$. If~$A$ is modular in this sense, then
it is also automorphic in the sense above by taking~$\pi$ to be the transfer of the automorphic
representation associated to~$f$ from~$\GSp(4)/\Q$ to~$\GL(4)/\Q$.
It was also shown in~\cite{BCGP} that certain classes of abelian surfaces over~$\Q$ were actually modular (see Theorem~\ref{theorem:infinite} below), and
even that there were infinitely many modular abelian surfaces over~$\Q$ up to twist with~$\End_{\C}(A) = \Z$. However, no explicit examples
of such surfaces were given in that paper.

The aim of this note is to give explicit examples of modular abelian
surfaces~$A/\Q$ with~$\End_{\C}(A) = \Z$ and such that~$A$ has good
reduction outside a set~$S$ that is either~$S = \{2,5\}$, $S = \{2,5,7\}$, or~$S = \{2,3,7\}$.
Previous explicit examples of modular abelian surfaces with trivial endomorphisms were found  by~\cite{BPVY} (in 2015) and
also by~\cite{Berger}; these results
relied heavily on very delicate and explicit computations of   spaces of low weight Siegel modular forms following~\cite{MR3315514,MR3713095}.
In particular, they rely on the conductor being relatively small and also take advantage of the fact that the conductor is odd and squarefree.
(The examples in those papers are of conductors~$277$,   $353$, $587$, and~$731 = 17 \cdot 43$.)
In contrast, the examples of this paper only require verifying some local properties of~$A$
at the prime~$p$ (with~$p = 3$ or~$p = 5$)  and showing that the image of the action of~$G_{\Q}$
on the~$p$-torsion of~$A = \Jac(C)$ is of a suitable form. Although the conductors of
our examples have only small factors, the conductors themselves are quite large --- the smallest of our examples has conductor~$98000 = 2^4 \cdot 5^3 \cdot 7^2$.
The modularity of the examples in this paper follows
by applying the following result (with either~$p = 3$ or~$p = 5$) proved
in~\cite[Propositions~10.1.1 and~10.1.3]{BCGP}.

\begin{theorem} \label{theorem:infinite} Let~$A/\Q$ be an abelian
  surface with good ordinary reduction at~$v|p$ and a polarization of degree prime to~$p$,
and suppose that the eigenvalues of Frobenius on~$A[p](\Fbar_p)$ are distinct.
Let
$$\rhobar_{A,p}: G_F \rightarrow \GSp_4(\F_p)$$
denote the mod-$p$ Galois representation associated to~$A[p]$, and assume
that~$\rhobar_{A,p}$ has vast and tidy image in the notation of~\cite{BCGP}.
Suppose that either:
\begin{enumerate}
\item $p = 3$, and~$\rhobar_{A,3}$ is induced from a~$2$-dimensional
  representation over a real quadratic extension~$F/\Q$ in which ~$3$ is unramified.
\item $p = 5$, and~$\rhobar_{A,5}$ is induced from a~$2$-dimensional
  representation valued in~$\GL_2(\F_5)$  over a real quadratic
extension~$F/\Q$ in which ~$5$ is unramified.
\end{enumerate}
Then~$A$ is modular.
\end{theorem}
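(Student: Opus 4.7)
The strategy is to verify the hypotheses of the main modularity lifting theorem of~\cite{BCGP} for $\GSp_4/\Q$ and apply it to the $p$-adic representation $r_{A,p} \colon G_\Q \to \GSp_4(\Z_p)$ on $A[p^\infty]$. The local hypotheses at~$p$ --- good ordinary reduction, polarization of degree prime to~$p$, and distinct Frobenius eigenvalues on $A[p]$ --- are exactly what is needed to put $r_{A,p}$ into an ordinary global deformation problem to which the Taylor--Wiles--Kisin patching method applies, while the ``vast and tidy'' hypothesis supplies enough image to run the Taylor--Wiles argument and to kill endoscopic congruences. Granted that lifting theorem, it suffices to establish \emph{residual modularity}: that $\rhobar_{A,p}$ is isomorphic to the mod~$p$ representation of some cuspidal Siegel eigenform of weight two on $\GSp_4/\Q$.

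To prove residual modularity I would exploit the induced structure. Let $\bar{r} \colon G_F \to \GL_2(\F_p)$ satisfy $\rhobar_{A,p} \simeq \mathrm{Ind}_{G_F}^{G_\Q}\bar{r}$. The fact that $\rhobar_{A,p}$ lands in $\GSp_4$ and that $F$ is totally real forces $\bar{r}$ to be totally odd with an appropriate similitude character, so $\bar{r}$ is a candidate for Hilbert modularity over~$F$. The core step is to show $\bar{r}$ is Hilbert modular. When $p=3$, this follows from Langlands--Tunnell after base change to~$F$: the projective image of $\bar{r}$ lies in $\mathrm{PGL}_2(\F_3) \cong S_4$ and is therefore solvable, producing a weight-one Hilbert modular form that lifts $\bar{r}$; the assumption that $3$ is unramified in~$F$ ensures a clean local picture at places above~$3$, so a Hilbert modularity lifting theorem then produces a weight-two Hilbert eigenform with residual representation $\bar{r}$. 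When $p=5$, one performs a ``$3$--$5$ switch'' over~$F$: one produces an auxiliary geometric object over~$F$ whose mod-$5$ representation agrees with $\bar{r}$ but whose mod-$3$ representation has solvable projective image, applies the $p=3$ argument, and bootstraps back to $p=5$ via a Hilbert modularity lifting theorem in residual characteristic~$3$; the hypothesis that $\bar{r}$ is already valued in $\GL_2(\F_5)$ (and not merely in $\GL_2(\F_{25})$) is what lets one find such an auxiliary object defined over~$F$ rather than over a quadratic extension.

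Once $\bar{r}$ is known to arise from a Hilbert cuspidal eigenform on $\GL_2/F$, automorphic induction yields a cuspidal automorphic representation of $\GL_4/\Q$; the symplectic self-duality inherent in inducing from a real quadratic extension, together with the central-character compatibility coming from the polarization, allows this representation to descend to $\GSp_4/\Q$, producing the desired mod-$p$ Siegel eigenform. Feeding this into the $\GSp_4$ modularity lifting theorem of~\cite{BCGP} then upgrades residual modularity to genuine modularity of $A$.

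The principal obstacle, in my view, is the $p=5$ case. The classical $3$--$5$ switch over~$\Q$ relies on the existence of a $\Q$-rational point on a suitable moduli space parametrising elliptic curves with level-$5$ structure; over a real quadratic field~$F$ one needs a substitute --- for example, a carefully chosen Hilbert modular abelian variety, or a twist with controlled mod-$3$ image --- whose local behaviour at every place above $3$ and $5$ is simultaneously ordinary, sufficiently big-image, and correctly unramified. Arranging this auxiliary object while simultaneously controlling the descent from $\GL_4/\Q$ to $\GSp_4/\Q$ so as to produce a genuine weight-two Siegel eigenform rather than merely a $\GL_4$-automorphic representation is the technical heart of the argument.
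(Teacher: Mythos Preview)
The paper does not give a proof of this theorem at all: it is simply quoted from~\cite{BCGP}, specifically Propositions~10.1.1 and~10.1.3 there, and the present paper only \emph{applies} it. Your outline is, in broad strokes, the strategy that~\cite{BCGP} actually uses: establish residual modularity by proving the two-dimensional piece~$\bar r$ is Hilbert modular over~$F$, automorphically induce and descend to~$\GSp_4/\Q$, then invoke the ordinary~$\GSp_4$ modularity lifting theorem (the local hypotheses you list --- good ordinary, $p$-distinguished, polarization prime to~$p$ --- are exactly the ones needed for that lifting theorem, and ``vast and tidy'' is the Taylor--Wiles input). So at the level of architecture you have recovered the argument.

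There is, however, a genuine gap in your $p=3$ step. You assert that the projective image of~$\bar r$ lies in~$\mathrm{PGL}_2(\F_3)\cong S_4$ and is therefore solvable, whence Langlands--Tunnell applies. But the theorem as stated does \emph{not} require~$\bar r$ to be valued in~$\GL_2(\F_3)$: note the contrast with case~(2), where the constraint ``valued in~$\GL_2(\F_5)$'' is explicitly imposed, whereas case~(1) says only ``a $2$-dimensional representation''. Indeed the group~$G_{480}$ in the paper's list of vast and tidy images arises precisely from~$\bar r$ valued in~$\GL_2(\F_9)$ with projective image~$A_5$, and several of the paper's examples have~$\im(\rhobar)=G_{480}$. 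For these, Langlands--Tunnell is unavailable since~$A_5$ is not solvable. The Hilbert modularity of such icosahedral~$\bar r$ over a totally real field is a substantially harder input, going back to Taylor's work on icosahedral Artin representations (potential automorphy combined with Brauer induction and solvable base change) and its descendants; in~\cite{BCGP} this is handled, but not by the mechanism you describe. So your identification of the $p=5$ switch as ``the principal obstacle'' is misleading: the $p=3$, $A_5$-image case is at least as delicate, and your plan as written does not cover it.
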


A precise definition of what representations are vast and tidy is included in~\S7.5 of~\cite{BCGP}, but we
content ourselves with the following list which exhausts all of our examples:

\begin{lemma}[Examples of vast and tidy representations from~{\cite[Lemmas~7.5.13 and~7.5.21]{BCGP}}]
\emph{The representation~$\rhobar_{A,p}$
is automatically vast and tidy when the image of~$\rhobar_{A,p}$ is one of the following conjugacy
classes of subgroups of~$\GSp_4(\F_p)$:
\begin{enumerate}
\item The  groups~$G_{2304}$, $G_{768}$, $G'_{768}$ or~$G_{480}$ in~$\GSp_4(\F_3)$
of orders~$2304$, $768$, $768$, and~$480$, where:
\begin{enumerate}
\item The group~$G_{2304}$ is a semi-direct product~$\Delta \rtimes \Z/2\Z$
where
$$\Delta = \left\{(A,B)\in\GL_2(\F_3)^2\mid \det(A)=\det(B)\right\};$$
it is (up to conjugacy) the unique subgroup of order~$2304$ of~$\GSp_4(\F_3)$.
\item The groups~$G_{768}$ and~$G'_{768}$ are subgroups of~$G_{2304}$ of index~$3$,
  and are (up to conjugacy) the only two subgroups of order~$768$ of~$\GSp_4(\F_3)$.
They are  isomorphic as abstract groups, but they are distinguished up to conjugacy inside~$\GSp_4(\F_3)$
by their intersections~$H_{384}$ and~$H'_{384}$ with~$\Sp_4(\F_3)$. In particular,
$(H_{384})^{{\ab}} \simeq \Z/6\Z$ and~$(H'_{384})^{{\ab}} \simeq \Z/2\Z$.
According to the small groups database of~\texttt{magma}  (cf.~\cite{groups}),
$$G_{768} \simeq G'_{768} \simeq \text{\texttt{SmallGroup(768,1086054)}},$$
whereas
$$H_{384} \simeq \text{\texttt{SmallGroup(384, 18130)}}, \quad H'_{384} \simeq \text{\texttt{SmallGroup(384, 618)}}.$$
These groups can also be distinguished by their images~$P_{192}$ and~$P'_{192}$ in~$\PSp_4(\F_3) \subset \PGSp_4(\F_3)$,
namely
$$P_{192} \simeq \text{\texttt{SmallGroup(192, 1493)}}, \quad P'_{192} \simeq \text{\texttt{SmallGroup(192, 201)}}.$$
\item The group~$G_{480}$ is a semi-direct product
$ \widetilde{A}_5 \rtimes \langle \sigma \rangle$ where~$\widetilde{A}_5 \subset \GL_2(\F_9)$ is a central
extension of~$A_5$ by~$\Z/4\Z$.
There are precisely two subgroups of this order up to conjugacy in~$\GSp_4(\F_3)$.
The second subgroup~$G'_{480}$
 also contains~$ \widetilde{A}_5$ with index two, but it is not a semi-direct
product. According to the small groups database of~\texttt{magma},
$$G_{480} \simeq  \text{\texttt{SmallGroup(480, 948)}}, \quad
G'_{480} \simeq  \text{\texttt{SmallGroup(480, 947)}}.$$
\end{enumerate}
\item The group~$G_{115200}$ in~$\GSp_4(\F_5)$ is a semi-direct product~$\Delta \rtimes \Z/2\Z$
where
$$\Delta = \left\{(A,B)\in\GL_2(\F_5)^2\mid \det(A)=\det(B)\right\};$$
it is (up to conjugacy) the unique subgroup of order~$115200$ of~$\GSp_4(\F_5)$.
\end{enumerate}
}
\end{lemma}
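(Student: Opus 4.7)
The plan is to reduce the statement to Lemmas~7.5.13 and~7.5.21 of~\cite{BCGP}, which give sufficient conditions for vastness and tidiness in terms of the image group, and to match each of the five listed conjugacy classes to a case treated there. First I would recall from~\S7.5 of~\cite{BCGP} the definitions: vastness is a largeness condition on the image ensuring the existence of enough Taylor--Wiles primes together with the absence of certain problematic submodules/subquotients in $\mathrm{ad}^0$, while tidiness is a technical condition on the similitude character ensuring good behaviour at auxiliary primes. Both are engineered so that the standard modularity lifting machinery runs for~$\GSp_4$.

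For the two ``maximal'' examples~$G_{2304} \subset \GSp_4(\F_3)$ and~$G_{115200} \subset \GSp_4(\F_5)$, each is described as~$\Delta \rtimes \Z/2\Z$ with~$\Delta = \{(A,B) \in \GL_2(\F_p)^2 : \det(A) = \det(B)\}$. This is precisely the image shape of a representation of~$G_{\Q}$ induced from a two-dimensional representation of~$G_F$ for a real quadratic~$F/\Q$ whose projection to the two~$\GL_2$ factors is suitably large, which is the setting of Lemma~7.5.21 of~\cite{BCGP}. Vastness and tidiness in these two cases then follow directly from that lemma, once one checks (a finite computation) that~$G_{2304}$ and~$G_{115200}$ are indeed the unique conjugacy classes of subgroups of their respective orders.

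For the remaining~$p = 3$ subgroups I would appeal to Lemma~7.5.13 of~\cite{BCGP}: the groups~$G_{768}$ and~$G'_{768}$ arise as index-three subgroups of~$G_{2304}$ corresponding to further congruence conditions on the residual representation over~$F$, while~$G_{480}$ and~$G'_{480}$ correspond to the case in which the representation over~$F$ is valued in~$\GL_2(\F_9)$ with projective image isomorphic to~$A_5$. For each group I would verify (i) that the group-theoretic data matches one of the cases enumerated in Lemma~7.5.13, and (ii) that the stated \texttt{SmallGroup} identifiers, the abelianizations~$(H_{384})^{\ab} \simeq \Z/6\Z$ versus~$(H'_{384})^{\ab} \simeq \Z/2\Z$, and the projective images~$P_{192}$ and~$P'_{192}$ in~$\PSp_4(\F_3)$ correctly pin down the two conjugacy classes in each pair.

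The main obstacle is not conceptual but a finite group-theoretic bookkeeping exercise: one must enumerate in \texttt{magma} the conjugacy classes of subgroups of~$\GSp_4(\F_3)$ (which has order~$51840$) of orders~$2304$, $768$, and~$480$, and check that the invariants listed in the statement suffice to separate them --- in particular that the two conjugacy classes of order~$768$ are indeed distinguished by the abelianization of the intersection with~$\Sp_4(\F_3)$, and that the two conjugacy classes of order~$480$ are distinguished by whether the extension of~$\widetilde{A}_5$ by~$\Z/2\Z$ splits. The analogous uniqueness check for the order-$115200$ subgroup of~$\GSp_4(\F_5)$ is similar. Once these identifications are in place, the vast and tidy conclusions are an immediate appeal to the cited lemmas of~\cite{BCGP}.
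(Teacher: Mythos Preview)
Your proposal is correct and matches the paper's approach exactly: the paper gives no proof of this lemma at all, treating it purely as a citation of \cite[Lemmas~7.5.13 and~7.5.21]{BCGP}, and your plan is likewise to identify each listed subgroup with a case covered by those lemmas. One small slip: $|\GSp_4(\F_3)| = 2\cdot|\Sp_4(\F_3)| = 103680$, not $51840$; and note that only $G_{480}$, not $G'_{480}$, is asserted to be vast and tidy.
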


The conditions  of the theorem are all very easy to verify in any given example (once found) with the possible
exception of computing the image of the mod-$p$ representation for~$p = 3$ or~$5$.
 We describe how we computed this
in the section below. The second problem is then to find a list of candidate curves.
Our original approach involved searching for curves in a large box, which did indeed result
in a number of examples. However, we then switched to using a collection of curves
provided to us by Andrew Sutherland, all of which had the property that they had good reduction
outside the set~$\{2,3,5,7\}$ (these were found during the construction
 of~\cite{database} but discarded because their minimal discriminants were too large).
  This list  consisted of some~$20$ million curves,
so the next task was to identify examples to which we could apply
Theorem~\ref{theorem:infinite}.
 For a genus two curve~$C$ on Sutherland's list,
we applied the following algorithm.
\begin{enumerate}
\item Fix a real quadratic field~$F$  of fundamental discriminant~$D$ dividing~$\Delta_C$ in
which~$p \in \{3,5\}$ is unramified. Since~$\Delta_C$ is only divisible by primes in~$\{2,3,5,7\}$,
there are at most seven such~$F$. Let~$\chi_D$ denote the quadratic character associated to~$F$.
\item Check whether~$a_q \equiv 0 \mod p$  for all primes~$q \le 100$ of good reduction for~$C$
with~$\chi_D(q) = -1$.
\item Check that~$a_q \ne 0$ for at least one prime~$q \le 100$ of good reduction for~$C$
with~$\chi_D(q) = -1$.
\end{enumerate}
Any~$C$ that passes this test is likely to have the following two properties: $\rhobar_{A,p}$ is induced
from~$F$, but the~$p$-adic representation~$\rho_{A,p}$  itself is not induced. The third condition in particular guarantees that~$A$ itself is not isogenous to
a base change of an elliptic curve defined over~$F$. Note that this test is very
fast --- one can discard a~$C$ as soon as one finds a prime~$q$ with~$\chi_D(q) = -1$ and~$a_q \not\equiv 0 \mod p$, so for almost all curves~$C$, one only has to compute~$a_q$ for very small primes~$q$.
In addition, the following postage stamp calculation with the Chebotarev density theorem suggests that false
positives will be  few in number: for each of the  allowable discriminants~$D$ (there are~$7$ such~$D$
for either~$p = 3$ or~$p = 5$), there are at least~$M \ge 10$ primes in the interval~$[10,100]$  with~$\chi_D(q) = -1$. A ``random'' abelian surface~$A$ will have~$a_q \equiv 0 \mod p$ for any such prime~$q$
approximately~$1/p$ of the time (the exact expectation depends on~$A[p]$ --- if the mod-$p$
representation is surjective, the exact expectation that~$a_q \equiv 0 \mod p$ for a random prime~$q$ is~$231/640$ for~$p = 3$
and~$3095/14976$ for~$p = 5$), and so one might
expect a false positive to occur with probability approximately~$1/p^{M}$.
On the other hand, false positives are certainly not impossible.
In our original box search,
we did find the one curve~$C: y^2 = x^5 - 2x^4 + 6x^3 + 5x^2 + 10x + 5$ that ``passed'' the test for~$\rhobar_{A,3}$ to be induced from~$\Q(\sqrt{7})$,
whereas it turns out instead to be induced from~$\Q(\sqrt{85})$ --- requiring only an accidental vanishing of~$a_q$
for~$q=23$, $73$, $89$, and~$97$. The smallest prime guaranteeing that~$\rhobar_{A,3}$ is not induced from~$\Q(\sqrt{7})$  in this case
is~$a_{151} = 5 \not\equiv 0 \mod 3$.

\section{Determining the mod-\texorpdfstring{$p$}{p} representation}

Consider a genus two curve
$$C: Y^2 = f(X),$$
with~$\deg(f) = 6$. The desingularization of the corresponding projective curve has two points~$\mathfrak{b}_1$ and~$\mathfrak{b}_2$ at infinity. The canonical class~$\mathfrak{O}$ in~$\Pic^2(C)$ is represented by the divisor~$\mathfrak{b}_1 + \mathfrak{b}_2$, and the Jacobian~$A = \Jac(C)$ can be identified with~$\Pic^2(C)$ under addition of the canonical class.
By Riemann--Roch, every class in~$\Pic^2(C)$ except $\mathfrak{O}$ has precisely one effective divisor. Thus, we may represent any point of~$A$ as an unordered pair~$\{P,Q\}$ of points on~$C$.

If we assume~$f(X)$ has a rational root, then, by suitably transforming the variables~$X$ and~$Y$, we can make~$\deg(f) = 5$; then, there will be exactly one point~$\mathfrak{b}$ at infinity, and the canonical class will be represented by~$2\mathfrak{b}$. We will not need this assumption, however, and several of our examples do not have
any Weierstrass points over~$\Q$.

\subsection{\texorpdfstring{$p = 3$}{p=3}}
Let~$K/\Q$ denote the Galois closure of the corresponding projective representation. It will contain the field~$\Q(x+u,xu,yv)$ for any~$3$-torsion point $\{P,Q\}$ of $A$, where~$P = (x,y)$ and~$Q = (u,v)$.
There exist polynomials $B_{ij}$, given in \cite[Theorem~3.4.1 and Appendix \uppercase\expandafter{\romannumeral2\relax}]{cassels_flynn_1996}, using which the multiplication-by-$n$ map can be described explicitly at the level of the Kummer surface of $A$. Writing the equation $[2]\{P,Q\} = -\{P,Q\}$ in terms of the Kummer coordinates explicitly, taking resultants, and eliminating spurious solutions, one can compute the minimal
polynomials of~$x+u$,~$xu$ and~$yv$ in any particular case, as well as determine
the Galois group of the corresponding extension.

Note that the first coordinates determine
the~$\GSp_4(\F_3)/\langle \pm 1 \rangle = \PGSp_4(\F_3)$-representation, so this determines the image of~$\rhobar_{A,3}$
modulo the central subgroup of order~$2$ as an abstract group.
One can similarly compute the field~$\Q(y+v,yv)$ if one wants to know the full~$\GSp_4(\F_3)$-representation.
In any case of interest, this is enough (purely
by considering possible orders) to determine the order of the image of~$\rhobar_{A,3}$ itself.
It then remains to determine the precise subgroup of~$\GSp_4(\F_3)$ in the cases where
this is ambiguous.
The group~$\PGSp_4(\F_3)$ has a natural permutation representation on~$40$ points, corresponding
to the non-zero points of~$A[3]$ up to sign (warning: the group~$\PGSp_4(\F_3)$ has a second non-conjugate
representation on~$40$ points). From this data, one can distinguish between~$G_{480}$
and~$G'_{480}$ purely based on the degrees of the polynomials arising from the computation above. The following table
gives the corresponding decomposition in the cases of interest:
\begin{center}
\begin{tabular}{|l|l|}
\hline
$G$ & Orbits \\
\hline
$G_{2304}$ & $8$, $32$ \\
$G_{768}$ &  $8$, $32$  \\
$G'_{768}$ &  $8$, $32$  \\
$G_{480}$ & $20$, $20$ \\
$G'_{480}$ & $40$ \\
\hline
\end{tabular}
\end{center}
The groups~$G_{768}$ and~$G'_{768}$ cannot be distinguished by this method. This is not
important for establishing modularity since both groups give representations with vast
and tidy image. However, in order to complete the tables, we distinguish between
these cases as follows: we \emph{explicitly} compute (using \texttt{magma})
the Galois group of the corresponding degree~$32$ polynomial over the field~$\Q(\sqrt{-3})$,
and see whether the resulting group is~$P_{192}$ or~$P'_{192}$ (in which case the
group is~$G_{768}$ or~$G'_{768}$ respectively).

\subsection{\texorpdfstring{$p = 5$}{p=5}}
Similar to the $p=3$ case, for an arbitrary point $\{P = (x,y),Q = (u,v)\}$ of $A$, we write the equation $3\{P,Q\} = -2\{P,Q\}$ in terms of the Kummer coordinates of the point, and take resultants to find the minimal polynomials of $x+u, xu$ and $yv$ of $5$-torsion points on $A$. The splitting field of these polynomials is the Galois closure~$K/\Q$ of the representation to $\PGSp_4(\F_5)=\GSp_4(\F_5)/\langle \pm 1 \rangle$.

We describe an algorithm for showing that the image~$\rhobar_{A,5}$
of a mod-$5$ representation in~$\GSp_4(\F_5)$ with cyclotomic determinant has image~$G_{115200}$.
The group~$\GSp_4(\F_5)$ has a representation on~$312 = (5^4 - 1)/2$ points, given
by the action on the non-trivial~$5$-torsion points up to sign (which factors
through~$\PGSp_4(\F_5)$).

\begin{lemma} Let~$G \subset \GSp_4(\F_5)$ be a subgroup, and suppose that
the similitude character is surjective on~$G$, or equivalently that~$[G: G \cap \Sp_4(\F_5)] = 4$.
Suppose, in addition, that~$G$ acts on the degree~$312$ permutation representation
above with two orbits of size~$288$ and~$24$ respectively.
Then:
\begin{enumerate}
\item $G$ is one of four groups, distinguished by their orders: $2304$, $4608$, $57600$, and~$115200$.
\item The degree~$24$ permutation representation of~$G$ factors through a group of
order~$576$, $1152$, $14400$, and~$28800$ respectively.
\end{enumerate}
\end{lemma}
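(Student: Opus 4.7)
The plan is to first identify the geometric structure on $\F_5^4$ forced by the existence of a $G$-orbit of size $24$ on the $312$ non-zero pairs $\{v,-v\}$, and then to classify the subgroups of $\GSp_4(\F_5)$ compatible with that structure. Since $(5^2-1)/2 = 12$, the natural source of a $24$-orbit is the union of the twelve nonzero pairs in each of two distinct $2$-dimensional subspaces $V_1,V_2\subset\F_5^4$, contributing $2\cdot 12 = 24$ pairs. My first step would be to show this is the \emph{only} possibility: any $G$-invariant set of $48$ nonzero vectors closed under negation must consist of the nonzero vectors of a union of two $2$-planes, and the existence of the complementary orbit of size $288 = (5^2-1)^2/2$ on the remaining pairs forces $\F_5^4 = V_1 \oplus V_2$ to be a symplectic direct sum (with both $V_i$ non-degenerate and mutually orthogonal), ruling out the Lagrangian and other degenerate configurations. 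Consequently $G$ lies in the stabilizer of the unordered pair $\{V_1,V_2\}$ inside $\GSp_4(\F_5)$, which is precisely $G_{115200} = \Delta \rtimes \langle\tau\rangle$, where $\tau$ swaps the two summands.

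Next, I would enumerate subgroups $G \subseteq G_{115200}$ with surjective similitude character that act transitively on the $288$ mixed-support pairs $\{\pm(v_1+v_2)\}$ with $v_i \in V_i^*$. The action of $(A,B)\in\Delta$ sends $(v_1,v_2)\mapsto(Av_1,Bv_2)$, and $\tau$ exchanges the two summands. Two of the four examples are immediate: $G_{115200}$ itself (order $115200$) and $\Delta$ (order $57600$), distinguished by whether they contain the swap. The remaining two examples, of orders $4608$ and $2304$, should arise from a proper subgroup of $\Delta$ in which each $\GL_2(\F_5)$-factor is replaced by an index-$5$ subgroup of order $96$ (a preimage of an $S_4\subset S_5 \simeq \mathrm{PGL}_2(\F_5)$), subject to the determinant-matching condition that gives $96^2/4 = 2304$, again depending on whether one includes the swap. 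I expect that a finite enumeration --- either by hand using the subgroup lattice of $\GL_2(\F_5)$ or by a short \texttt{magma} calculation --- verifies that these four are the only possibilities.

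For the second conclusion, I would compute the kernel $K$ of the action of $G$ on the $24$ points of the small orbit. An element $(A,B)\in\Delta$ fixing every pair $\{\pm v_1\}\subset V_1$ and $\{\pm v_2\}\subset V_2$ must act as $\pm I$ on each factor (since a linear self-map of $\F_5^2$ preserving every line is a scalar), while any element swapping $V_1$ and $V_2$ clearly cannot lie in $K$. Thus $K = \{(\pm I,\pm I)\}$ has order $4$, and a short check using the surjective similitude hypothesis shows that $K \subseteq G$ in each of the four cases. Dividing, the image orders become $115200/4 = 28800$, $57600/4 = 14400$, $4608/4 = 1152$, and $2304/4 = 576$, matching the claim.

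The main obstacle is the geometric rigidity statement in the first step: proving that a $G$-orbit of size $24$ on the $312$ symplectic pairs must come from a symplectic direct-sum decomposition of $\F_5^4$, rather than some exotic configuration of $48$ vectors. Once that is established, the remaining classification of subgroups of $G_{115200}$ and the kernel computation are straightforward finite checks.
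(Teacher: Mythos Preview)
The paper gives no proof of this lemma; in context it is evidently a finite check, presumably carried out in \texttt{magma} by enumerating (conjugacy classes of) subgroups of $\GSp_4(\F_5)$ with surjective similitude character and computing their orbit structure on the $312$ nonzero pairs. Your proposal is therefore genuinely different: you attempt a structural argument that first pins down the geometry forced by the $24$-orbit and then classifies subgroups of the resulting stabilizer $G_{115200}$. If it worked, it would be more illuminating than a bare machine verification.

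The gap you yourself flag, however, is real and not merely cosmetic. The claim that a $G$-orbit of size $24$ (with complement a single $288$-orbit) must consist of the nonzero pairs in a symplectic direct-sum decomposition $V_1\oplus V_2$ is not obvious a priori: one has to rule out orbits supported on a single irreducible $G$-submodule, on a pair of Lagrangians, or on configurations of $48$ vectors not arising from any $2$-plane at all. Establishing this rigidity without already having the subgroup classification in hand seems to require either a delicate case analysis of possible images in $\PGSp_4(\F_5)$ or, ultimately, a computer search --- at which point one may as well perform the direct enumeration the paper implicitly relies on. Your later steps (the enumeration inside $G_{115200}$ and the kernel computation $K=\{(\pm I,\pm I)\}$ giving the factor of $4$) are fine once the rigidity is granted, though even there you defer the transitivity check on the $288$ mixed pairs to ``a short \texttt{magma} calculation,'' so the conceptual gain over the paper's approach is partial.
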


In particular, we can distinguish these representations by computing the Galois group
of the factor of size~$24$.
Hence by computing the corresponding polynomials of order~$24$ and~$288$ we
can verify that the image is indeed~$G_{115200}$.

\subsection{Checking the Sato--Tate group}

For all the residual representations we consider, it turns out that the image of~$\rhobar$
is big enough to guarantee that the Sato--Tate group is either~$\USp(4)$ or the normalizer of~$\SU(2) \times \SU(2)$.
More precisely:

\begin{lemma} Suppose that~$p = 3$ and that~$\rhobar_{A,p}$ has image
either~$G_{480}$, $G_{768}$, $G'_{768}$, $G_{2304}$, or that~$p = 5$,
and~$\rhobar_{A,p}$ has image~$G_{115200}$. Then the Sato--Tate group of~$A$
is either~$\USp(4)$ or~$N(\SU(2) \times \SU(2))$. Moreover,  if the Sato--Tate
group is~$N(\SU(2) \times \SU(2))$, the quadratic extension~$F/\Q$ over which~$A$ has
Sato--Tate group~$\SU(2) \times \SU(2)$ is the quadratic field~$F$ from which~$\rhobar$
is induced.
\end{lemma}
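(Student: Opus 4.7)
The plan is to use the classification of Sato--Tate groups of abelian surfaces over $\Q$ (Fit\'e--Kedlaya--Rotger--Sutherland) and eliminate, case by case, every possibility other than $\USp(4)$ and $N(\SU(2) \times \SU(2))$. The connecting principle is that if $\mathrm{ST}(A) = G$, then, after replacing $G_\Q$ by an open subgroup $H$ of index dividing $|\pi_0(G)|$, the image of $\rho_{A,p}|_H$ in $\GSp_4(\Q_p)$ is Zariski-dense in (a form of) the connected component $G^0$, and therefore the irreducible constituents of $\rhobar_{A,p}|_H$ on the standard representation must appear among those of the natural $4$-dimensional representation of $G^0$.

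First, I would rule out every Sato--Tate group whose identity component has dimension strictly less than $\dim(\SU(2) \times \SU(2)) = 6$, i.e., $G^0$ is a torus, $\SU(2)$, or $\SU(2) \times U(1)$. In each of these cases the natural $4$-dimensional representation of $G^0$ contains a $1$-dimensional subrepresentation, so for some open $H \subset G_\Q$ of index bounded by $|\pi_0(G)|$ (which by the FKRS tables is bounded by a small explicit constant), $\rhobar_{A,p}|_H$ must contain a $1$-dimensional constituent. But this fails for each of our groups: on the canonical index-$2$ subgroup (namely $\Delta$ or $\widetilde{A}_5$) the representation is a sum of two absolutely irreducible $2$-dimensional pieces, and a direct inspection of the subgroup lattice rules out smaller-index subgroups having any $1$-dimensional quotient. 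In the cases $G_{480}$ and $G_{115200}$ one can shortcut this by observing that these groups contain $A_5$ as a section, whereas every Sato--Tate group with identity component strictly contained in $\SU(2) \times \SU(2)$ is solvable.

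Among the Sato--Tate groups with identity component $\SU(2) \times \SU(2)$, the only one whose natural $4$-dimensional representation is absolutely irreducible (as it must be, given our $\rhobar_{A,p}$) is $N(\SU(2) \times \SU(2))$; the other forms would force a $G_\Q$-equivariant splitting of $V_p A$ into two $2$-dimensional pieces, contradicting the irreducibility of $G_{2304}$, $G_{480}$, etc. on the standard representation. Thus if $\mathrm{ST}(A) \ne \USp(4)$, it is $N(\SU(2) \times \SU(2))$.

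For the last assertion, when $\mathrm{ST}(A) = N(\SU(2) \times \SU(2))$, the index-$2$ connected subgroup corresponds to a unique quadratic extension $F_{\mathrm{ST}}/\Q$ over which $V_pA$ splits as a sum $W_1 \oplus W_2$ of absolutely irreducible $2$-dimensional $G_{F_{\mathrm{ST}}}$-representations; reducing mod $p$, $\rhobar_{A,p}|_{G_{F_{\mathrm{ST}}}}$ has the same splitting. On the other hand, since the image of $\rhobar_{A,p}$ is one of our explicit groups, $\rhobar_{A,p}$ is visibly induced from the quadratic field $F$ fixed by its unique index-$2$ subgroup $\Delta$ (or $\widetilde{A}_5$), and this is the only quadratic extension over which $\rhobar_{A,p}$ splits as a sum of two absolutely irreducible $2$-dimensional pieces. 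Hence $F_{\mathrm{ST}} = F$. The main obstacle is the rigorous implementation of the link between $\mathrm{ST}(A)$ and the mod-$p$ image of Galois; this is handled by the known cases of the Mumford--Tate and Tate conjectures for abelian surfaces.
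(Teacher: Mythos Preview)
Your approach is the same as the paper's: invoke the Fit\'e--Kedlaya--Rotger--Sutherland classification and observe that the given mod-$p$ images are incompatible with every Sato--Tate group except $\USp(4)$ and $N(\SU(2)\times\SU(2))$, then pin down $F$ via the reducibility of $\rhobar$ over a quadratic extension. The paper's own proof is a two-sentence appeal to exactly this mechanism; you have simply fleshed out the elimination.

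Two small inaccuracies are worth fixing. First, the ``solvable'' shortcut is phrased too loosely: a Sato--Tate group with identity component $\SU(2)$ is certainly not solvable as a Lie group, and for $p=5$ the group $\GL_2(\Z_5)$ already has $A_5$ as a subquotient, so having $A_5$ as a section of the mod-$p$ image does not by itself rule out a rank-one identity component. What actually does the work is your first argument (the shape of the reducibility on subgroups of bounded index), so you may as well drop the shortcut. Second, for $G_{2304}$ one computes $G_{2304}^{\mathrm{ab}}\simeq(\Z/2\Z)^2$, so $\Delta$ is \emph{not} the unique index-$2$ subgroup; there are three. What you need, and what is true, is that $\Delta$ is the unique index-$2$ subgroup on which the $4$-dimensional representation becomes reducible (the other two still contain an element interchanging the two blocks). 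With that correction your identification $F_{\mathrm{ST}}=F$ goes through. Incidentally, the paper notes parenthetically that for $G_{480}$ the case $N(\SU(2)\times\SU(2))$ cannot actually occur; this is because the induced splitting of $V_pA$ over $F_{\mathrm{ST}}$ would be $\Q_p$-rational, forcing an $\F_3$-rational splitting of $\rhobar|_{\widetilde{A}_5}$, which does not exist.
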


\begin{proof} The image of~$\rhobar_{A,p}$ is constrained by the Sato--Tate group,
and thus the fact that the Sato--Tate group
can only be~$\USp(4)$ or~$N(\SU(2) \times \SU(2))$ follows directly from a classification of all such groups in~\cite{MR2982436}.
(In fact, when the image is~$G_{480}$, only the first case can occur.)
In the latter case, the representation becomes reducible over the quadratic extension~$F$ where~$A$
has Sato--Tate group~$\SU(2) \times \SU(2)$, and (for the given~$\rhobar$) this forces~$F$ to be the field from which~$\rhobar$
is induced.
\end{proof}

In particular, in all our examples, our initial selection process requires the existence of a prime~$q$ of good reduction
with~$\chi(q) = -1$ and~$a_q \ne 0$, which implies that~$\rho_{A,p}$ cannot be induced from~$F$, and thus
the Sato--Tate group in each example below is~$\USp(4)$.

\section{Examples}
Of the curves we consider, a number satisfy the conditions of the main theorem,
and are thus provably modular.  For any curve~$C$
that is modular, so too are any quadratic twists.
Hence we only list a single representative curve for each equivalence class of abelian surfaces
under both~$\Q$-isogenies and twisting by quadratic characters.

\subsection{Inductions from~\texorpdfstring{$\GL_2(\F_3)$}{GL2(F3)} and~\texorpdfstring{$\GL_2(\F_9)$}{GL2(F9)}}

We first give the examples of modular curves
whose mod-$3$ representation is induced from either~$\GL_2(\F_3)$ or~$\GL_2(\F_9)$-representations of~$G_F$
for real quadratic fields~$F$. It turns out that, in the range of our computation, the
representation~$\rhobar$ up to twist determined the representation~$\rho$ up to twist
--- after applying our other desiderata, including that~$A/\Q$ had good reduction at~$p$
and had Sato--Tate group~$\USp(4)$. In particular, all the examples below
give rise to mod-$3$ representations that are not twist equivalent.
The examples~$C$ we choose to list are of minimal conductor amongst all those
with Jacobian isogenous to a twist of~$\Jac(C)$.
The conductors
were computed rigorously away from~$2$ using \texttt{magma}. The conductors at~$2$
were computed for us by Andrew Sutherland using an analytic algorithm
discussed in~\S5.2 of~\cite{database}. This computation \emph{assumes} the analytic
continuation and functional equation for~$L(A,s)$, which we know to be true in this case.
(More precisely, as explained to us by Andrew Booker,
one version of this program gives a non-rigorous computation of these conductors and a second
slower but more rigorous version then confirms these values.)
In the case of ties, we chose
the curve with smaller  minimal discriminant. In the case of subsequent ties, we eyeballed the different
forms and chose the one that looked the prettiest.

\begin{theorem} The Jacobians~$A=\Jac(C)$ of the following smooth genus two curves~$C$ over~$\Z[1/70]$ are modular.
In particular, the~$L$-function~$L(A,s)$ is holomorphic in~$\C$ and satisfies the corresponding
functional equation.
Each $A$ has good ordinary reduction at~$3$ and is~$3$-distinguished and~$\End_{\C}(A) = \Z$.
Moreover, the representation~$\rhobar_{A,3}$
is induced from a~$\GL_2(\Fbar_3)$-valued representation  of~$G_F$ that is vast and tidy.
{\footnotesize
\begin{center}
\begin{tabular}{|l|l|l|l|l|}
\hline
Curve & Cond & Disc & $\im(\rhobar)$ & $\Delta_F$ \\
\hline
$y^2 = x^6-10 x^4+2 x^3+31 x^2-13 x-18$ & $2^45^37^2$ & $2^85^37^3$ &  $G_{480}$ & $5$ \\
$y^2 = -5 x^6-20 x^5-10 x^4+36 x^3+22 x^2-20 x$ & $2^{10}5^37$ & $2^{20}5^47^3$ & $G'_{768}$ & $5$ \\
$y^2 + y = -4 x^5-23 x^4-22 x^3+74 x^2-40 x+6$ &  $2^85^37^2$ & $2^{19}5^77^2$ & $G_{2304}$ & $5$ \\
$y^2 = 16 x^6-46 x^4+10 x^3+46 x^2-9 x-17$ &  $2^{12}5^27^4$ & $2^{19}5^97^4$ & $G_{480}$ & $5$ \\
 \hline
 $y^2 =2 x^5-8 x^4+26 x^2-7 x-26$ & $2^{15}5$ & $2^{16}5^3$  & $G_{2304}$ & $8$ \\
 $y^2 = x^5-x^4-4 x^3-44 x^2-60 x-100$ &  $2^{14}5\cdot 7$ & $2^{33}5^37$ & $G_{2304}$ & $8$ \\
 $y^2 =x^5-17 x^4+70 x^3+26 x^2-35 x-29$ & $2^{16}5\cdot 7$ &  $2^{37}5^37$ &  $G_{2304}$ & $8$ \\
  $y^2 + x^2 y = 13 x^6-29 x^5-10 x^4+41 x^3+6 x^2+20 x+20$ & $2^75^27^4$ & $2^{16}5^27^{16}$
  & $G_{2304}$ & $8$ \\
  $y^2 = x^5-11 x^4-2 x^3-34 x^2-5 x-25$ & $2^{20}5\cdot 7$ & $2^{21}5^37^3$ & $G_{768}$ & $8$ \\
  $y^2 = -2 x^6-41 x^5-48 x^4+54 x^3+42 x^2-49 x$ & $2^{14}5^27^4$ & $2^{32}5^27^{11}$ &
 $G_{2304}$ & $8$ \\
 $y^2 = 2 x^5+34 x^4-16 x^3-52 x^2-13 x-1$ & $2^{19}5^37^2$ & $2^{20}5^57^6$ &  $G'_{768}$ & $8$ \\
 $y^2 = 8 x^6-24 x^5-4 x^4+20 x^3+49 x^2-21 x-28$ & $2^{15}5^27^4$ & $2^{23}5^67^9$ & $G_{2304}$ & $8$ \\
\hline
$y^2 + (x+1)y = 64 x^5-8 x^4+39 x^3+x^2+2 x+1$ & $2^75^37^3$ &  $2^{27}5^67^6$ & $G_{480}$ & $40$ \\
$y^2 = 15 x^5+23 x^4+20 x^3+28 x^2+12 x-4$ & $2^{14}3 \cdot 5^3$ & $2^{33}3^25^4$ &  $G_{2304}$ & $40$ \\
$y^2 = 3 x^5+7 x^4+28 x^3+20 x^2+28 x-36$ &  $2^{14}3 \cdot 5^3$ & $2^{36}3^25^4$ &  $G_{2304}$ & $40$ \\
\hline
\end{tabular}
\end{center}
}
\end{theorem}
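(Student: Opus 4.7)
The plan is to apply Theorem~\ref{theorem:infinite}(1) with $p=3$ to each of the fifteen curves in the table, since all four tabulated image groups $G_{480}$, $G_{768}$, $G'_{768}$, $G_{2304}$ are identified in the Lemma as automatically vast and tidy. For each curve $C$ with $A = \Jac(C)$, I would first dispose of the local hypotheses at $3$: smoothness at $3$ (visible from the discriminant column, modulo changes of model for curves with a factor of $3$ in the displayed discriminant), good ordinary reduction, and the $3$-distinguished condition. Concretely, I would count points on $C(\F_3)$ and $C(\F_9)$ to determine $L_3(C,T) \bmod 3$, confirm that the middle coefficient is nonzero (ordinariness), and check that the two mod-$3$ unit roots of Frobenius are distinct.

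Next, to identify $\im(\rhobar_{A,3})$ I would follow the procedure of Section~2.1: using the Cassels--Flynn polynomials $B_{ij}$ to write $[2]\{P,Q\} = -\{P,Q\}$ in Kummer coordinates, take resultants, and extract the minimal polynomials of the invariants $x+u$, $xu$, $yv$ attached to $3$-torsion points $\{P,Q\}$ with $P=(x,y)$, $Q=(u,v)$. The orbit-length table in Section~2.1 separates $G_{480}$ and $G'_{480}$ (orbits $20{+}20$ and $40$ respectively) from the remaining groups (all with pattern $8{+}32$), and within the pattern $8{+}32$ the order of the Galois group of the degree-$32$ polynomial distinguishes $G_{2304}$ from its index-$3$ subgroups. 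To further separate $G_{768}$ from $G'_{768}$, I would compute in \texttt{magma} the Galois group of the degree-$32$ polynomial over $\Q(\sqrt{-3})$ and compare it against $P_{192}$ or $P'_{192}$ in the small groups database.

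Once $\im(\rhobar_{A,3})$ is pinned down, induction of $\rhobar_{A,3}$ from a $2$-dimensional $G_F$-representation is forced by the abstract structure of each listed image (each has a distinguished normal subgroup of index~$2$, corresponding to either the $\Delta$-factor of~$G_{2304}$ or the $\widetilde{A}_5$-factor of~$G_{480}$), and the specific real quadratic field $F=\Q(\sqrt{\Delta_F})$ is identified by checking $a_q \equiv 0 \pmod 3$ for all good primes~$q$ inert in~$F$ within a computational range; unramifiedness of~$3$ in~$F$ is immediate since $3 \nmid \Delta_F \in \{5,8,40\}$. To upgrade to $\End_{\C}(A) = \Z$, I would invoke the Sato--Tate lemma of the previous section together with the existence, demanded by the selection algorithm in the introduction, of a good prime $q$ with $\chi_F(q) = -1$ and $a_q \ne 0$: this excludes $N(\SU(2)\times\SU(2))$ as Sato--Tate group and leaves~$\USp(4)$, which forces $\End_{\C}(A) = \Z$.

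The main obstacle is the rigorous identification of $\im(\rhobar_{A,3})$ inside $\GSp_4(\F_3)$ up to conjugacy, rather than merely up to abstract isomorphism: the Kummer-surface elimination produces fairly large resolvent polynomials, and certifying their Galois groups --- especially for the $G_{768}$ vs $G'_{768}$ dichotomy, which requires the auxiliary computation over $\Q(\sqrt{-3})$ --- is the computationally delicate step. Once each image is identified, modularity is immediate from Theorem~\ref{theorem:infinite}(1), and the holomorphy of $L(A,s)$ on all of~$\C$ together with the stated functional equation are formal consequences of the cuspidal automorphy of the transfer from $\GSp_4/\Q$ to $\GL_4/\Q$.
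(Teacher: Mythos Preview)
Your proposal is correct and follows essentially the same approach as the paper: verify the local hypotheses at~$3$ by point-counting, determine the conjugacy class of~$\im(\rhobar_{A,3})$ in~$\GSp_4(\F_3)$ via the Kummer-surface computation of Section~2.1 (using orbit patterns and, for~$G_{768}$ versus~$G'_{768}$, the auxiliary computation over~$\Q(\sqrt{-3})$), invoke the Sato--Tate lemma together with a witness prime~$q$ to obtain~$\End_{\C}(A)=\Z$, and then apply Theorem~\ref{theorem:infinite}(1). One small sharpening: the rigorous identification of~$F$ (and in particular that it is real and that~$\rhobar_{A,3}$ is genuinely induced from it) comes from the explicit Galois computation of the~$3$-torsion field --- which pins down the index-two subgroup and hence its fixed quadratic field --- rather than from the heuristic~$a_q\equiv 0\pmod 3$ test, which the paper uses only as a search filter.
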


\begin{example} Precisely one curve in this table is actually smooth over a smaller ring, namely
  the curve of conductor~$163840=2^{15}\cdot 5$ which is smooth over~$\Z[1/10]$. This curve has a quadratic
twist with particularly small na\"{i}ve height, namely the curve:
$$y^2 = 4 x^5 + 6 x^4 + 4 x^3 + 6 x^2 + 2 x + 3$$
which also has conductor~$163840 = 2^{15} \cdot 5$ but larger minimal
discriminant
$$131072000000 = 2^{23} \cdot 5^6$$
rather than~$8192000 = 2^{16} \cdot 5^3$ as the curve
in the table.
The mod-$3$ representation of both of these curves is actually unramified at~$5$, and is congruent
up to twist to the mod-$3$ representation attached to the curve
$y^2 = 4 x^5 - 4 x^4 + 4 x^3 - 2 x^2 + x$
of conductor~$2^{15}$. The Jacobian of this latter curve is isogenous to~$\Res_{\Q(\sqrt{2})/\Q}(E)$, where~$E$ is the elliptic curve:
$$y^2  + \sqrt{2} x y = x^3 + (-1 - \sqrt{2}) x^2 + 2(\sqrt{2}+1)x - 3 \sqrt{2} - 5.$$
\end{example}

\subsection{Inductions from~\texorpdfstring{$\GL_2(\F_{5})$}{GL2(5)}}

We now consider the case~$p = 5$.

\begin{theorem}  \label{theorem:miraclecurve}
  The Jacobians~$A=\Jac(C)$ of the following smooth genus two curves~$C$ over~$\Z[1/42]$ are modular.
In particular, the~$L$-function~$L(A,s)$ is holomorphic in~$\C$ and satisfies the corresponding
functional equation.
Each~$A$ has good ordinary
reduction at~$5$ and is~$5$-distinguished and~$\End_{\C}(A)=\Z$. Moreover, the representation~$\rhobar_{A,5}$
is induced from a~$\GL_2(\F_5)$-valued representation  of~$G_F$ that is vast and tidy.
{\footnotesize
\begin{center}
\begin{tabular}{|l|l|l|l|l|}
\hline
Curve & Cond & Disc & $\im(\rhobar)$ & $\Delta_F$ \\
\hline
$y^2  + xy = 7 x^6-22 x^5-7 x^4+61 x^3-3 x^2-54 x-12$ & $2^73^27^3$ & $2^{11}3^97^4$
& $G_{115200}$ & $8$ \\
$y^2 = 8 x^6-24 x^5-30 x^4+8 x^3-24 x^2-48 x-8$
& $2^63^87$ & $2^{51}3^87$ & $G_{115200}$  & $8$ \\
\hline
\end{tabular}
\end{center}
}
\end{theorem}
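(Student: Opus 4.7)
The plan for each of the two listed curves~$C$ is to verify every hypothesis of Theorem~\ref{theorem:infinite}(2), so that modularity (and thus the analytic continuation and functional equation of~$L(A,s)$) follows directly; the claim~$\End_{\C}(A)=\Z$ and the Sato--Tate identification then follow from the lemma of~\S2.3 once we know that the~$5$-adic representation is not itself induced.

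Four of the hypotheses reduce to straightforward checks. The canonical polarization on~$\Jac(C)$ has degree~$1$, prime to~$5$. The tabulated conductor is divisible only by~$2,3,7$, so~$A$ has good reduction at~$5$. Computing~$L_5(C,T)$ from~$\#C(\F_5)$ and~$\#C(\F_{25})$, I verify good ordinary reduction (Newton polygon slopes~$0,0,1,1$) and~$5$-distinguishedness (the two unit-root Frobenius eigenvalues are distinct modulo~$5$), the latter also providing distinct Frobenius eigenvalues on~$A[5](\Fbar_5)$. Finally, since~$\Delta_F=8$, the candidate field~$F=\Q(\sqrt{2})$ is real and~$5\nmid\Delta_F$, so~$5$ is unramified in~$F$.

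The core computation is showing that~$\im(\rhobar_{A,5})=G_{115200}$. Following~\S2.2, I use the Cassels--Flynn polynomials~$B_{ij}$ to translate the equation~$3\{P,Q\}=-2\{P,Q\}$ on the Kummer surface of~$A$, take resultants to eliminate Kummer coordinates, and extract the minimal polynomials of~$x+u$,~$xu$,~$yv$ for~$5$-torsion points~$\{P=(x,y),Q=(u,v)\}$. These encode the permutation action of the projective image on the~$312$ non-trivial~$5$-torsion points taken up to sign, and I verify that the orbit sizes are~$288$ and~$24$. Together with surjectivity of the similitude character (automatic from the cyclotomic determinant), the lemma of~\S2.2 reduces the image to one of four groups distinguished by their order; computing the Galois group of the degree-$24$ factor in \texttt{magma} and confirming it has order~$28800$ pins down~$\im(\rhobar_{A,5})=G_{115200}$. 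The lemma quoted from~\cite{BCGP} then delivers the vast-and-tidy property.

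The main obstacle is this polynomial calculation: forming the requisite resultants symbolically and factoring the resulting high-degree polynomials is delicate, and the conclusion rests on trusting the precise orbit decomposition rather than merely a group order. Once the image is identified, inducedness of~$\rhobar_{A,5}$ from~$F=\Q(\sqrt{2})$ is a finite group-theoretic check on the appropriate index-$2$ subgroup of~$G_{115200}$, and can also be cross-verified numerically by checking~$a_q\equiv 0\pmod{5}$ for all small good primes~$q$ with~$\chi_8(q)=-1$. Non-inducedness of the full~$5$-adic representation is certified by exhibiting a single prime~$q$ of good reduction with~$\chi_8(q)=-1$ and~$a_q\ne 0$, as in the introduction's selection test. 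All hypotheses of Theorem~\ref{theorem:infinite}(2) now hold, yielding modularity; combined with the lemma of~\S2.3, this also gives~$\End_{\C}(A)=\Z$ and Sato--Tate group~$\USp(4)$.
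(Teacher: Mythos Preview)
Your proposal is correct and follows essentially the same approach as the paper: the theorem has no separate proof in the text, and the argument is precisely the verification of the hypotheses of Theorem~\ref{theorem:infinite}(2) via the computational methods of~\S2.2, together with the Sato--Tate lemma of~\S2.3 and the existence of a prime~$q$ with~$\chi_8(q)=-1$ and~$a_q\neq 0$ to rule out the~$N(\SU(2)\times\SU(2))$ case and conclude~$\End_{\C}(A)=\Z$.
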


The second curve also admits a quadratic twist of smaller na\"{i}ve height, namely
$$y^3 + x^2 y = x^6 - 3 x^5 - 4 x^4 + x^3 - 3 x^2 - 6 x - 1$$
of conductor~$5878656 = 2^7 \cdot 3^8 \cdot 7$ and minimal discriminant~$96315899904 = 2^{21} \cdot 3^8
\cdot 7$.

\section{Acknowledgments}
We would like to thank Andrew Booker, Andrew Sutherland, and John Voight for useful discussions about this project.
We would also like to thank Andrew Sutherland for providing us with a large list of genus two
curves over~$\Q$ with good reduction outside~$\{2,3,5,7\}$. Finally, we would  like to thank Andrew Sutherland and Andrew Booker for help computing the~$2$-part of the conductors of our curves.

\bibliographystyle{amsalpha}
\bibliography{Examples}

\newcommand{\etalchar}[1]{$^{#1}$}
\renewcommand{\MR}[1]{}
\providecommand{\bysame}{\leavevmode\hbox to3em{\hrulefill}\thinspace}
\providecommand{\MR}{\relax\ifhmode\unskip\space\fi MR }
% \MRhref is called by the amsart/book/proc definition of \MR.
\providecommand{\MRhref}[2]{%
  \href{http://www.ams.org/mathscinet-getitem?mr=#1}{#2}
}
\providecommand{\href}[2]{#2}
\begin{thebibliography}{BCDT01}

\bibitem[BCDT01]{MR1839918}
Christophe Breuil, Brian Conrad, Fred Diamond, and Richard Taylor, \emph{On the
  modularity of elliptic curves over {$\mathbf{Q}$}: wild 3-adic exercises}, J.
  Amer. Math. Soc. \textbf{14} (2001), no.~4, 843--939 (electronic).
  \MR{1839918 (2002d:11058)}

\bibitem[BCGP18]{BCGP}
George Boxer, Frank Calegari, Toby Gee, and Vincent Pilloni, \emph{Abelian
  surfaces over totally real fields are potentially modular}, arXiv:1812.09269
  [math.NT], 2018.

\bibitem[BEO02]{groups}
Hans~Ulrich Besche, Bettina Eick, and Eamonn~A. O'Brien, \emph{A millennium
  project: constructing small groups}, Internat. J. Algebra Comput. \textbf{12}
  (2002), no.~5, 623--644. \MR{1935567}

\bibitem[BK17]{Berger}
Tobias Berger and Krzysztof Klosin, \emph{Deformations of {S}aito-{K}urokawa
  type and the {P}aramodular {C}onjecture (with an appendix by {C}ris {P}oor,
  {J}erry {S}hurman, and {D}avid {S}. {Y}uen)}, arXiv:1710.10228 [math.NT],
  2017.

\bibitem[BPP{\etalchar{+}}18]{BPVY}
Armand Brumer, Ariel Pacetti, Chris Poor, Gonzalo Tornar{\'\i}a, John Voight,
  and David~S. Yuen, \emph{On the paramodularity of typical abelian surfaces},
  arXiv:1805.10873 [math.NT], 2018.

\bibitem[BSS{\etalchar{+}}16]{database}
Andrew~R. Booker, Jeroen Sijsling, Andrew~V. Sutherland, John Voight, and Dan
  Yasaki, \emph{A database of genus-2 curves over the rational numbers}, LMS J.
  Comput. Math. \textbf{19} (2016), no.~suppl. A, 235--254. \MR{3540958}

\bibitem[CF96]{cassels_flynn_1996}
J.~W.~S. Cassels and E.~V. Flynn, \emph{Prolegomena to a middlebrow arithmetic
  of curves of genus 2}, London Mathematical Society Lecture Note Series,
  Cambridge University Press, 1996.

\bibitem[FKRS12]{MR2982436}
Francesc Fit\'e, Kiran~S. Kedlaya, V\'\i{}ctor Rotger, and Andrew~V.
  Sutherland, \emph{Sato-{T}ate distributions and {G}alois endomorphism modules
  in genus 2}, Compos. Math. \textbf{148} (2012), no.~5, 1390--1442.
  \MR{2982436}

\bibitem[PSY17]{MR3713095}
Cris Poor, Jerry Shurman, and David~S. Yuen, \emph{Siegel paramodular forms of
  weight 2 and squarefree level}, Int. J. Number Theory \textbf{13} (2017),
  no.~10, 2627--2652. \MR{3713095}

\bibitem[PY15]{MR3315514}
Cris Poor and David~S. Yuen, \emph{Paramodular cusp forms}, Math. Comp.
  \textbf{84} (2015), no.~293, 1401--1438. \MR{3315514}

\bibitem[Rie59]{Riemann}
Bernhard Riemann, \emph{\"{U}ber die {A}nzahl der {P}rimzahlen unter einer
  gegebenen {G}r\"{o}sse}, Monatsberichte der Berliner Akademie (1859),
  671--680.

\bibitem[Ser70]{Serre}
Jean-Pierre Serre, \emph{Facteurs locaux des fonctions z\^eta des vari\'et\'es
  alg\'ebriques (d\'efinitions et conjectures)}, S\'eminaire
  Delange-Pisot-Poitou. Th\'eorie des nombres \textbf{11} (1969-1970), no.~2,
  1--15.

\bibitem[TW95]{MR1333036}
Richard Taylor and Andrew Wiles, \emph{Ring-theoretic properties of certain
  {H}ecke algebras}, Ann. of Math. (2) \textbf{141} (1995), no.~3, 553--572.
  \MR{1333036 (96d:11072)}

\bibitem[Wil95]{MR1333035}
Andrew Wiles, \emph{Modular elliptic curves and {F}ermat's last theorem}, Ann.
  of Math. (2) \textbf{141} (1995), no.~3, 443--551. \MR{1333035}

\end{thebibliography}

\end{document}